\newtheorem{theoreme}{Theorem}[section]
\newtheorem{prop}[theoreme]{Proposition}
\theoremstyle{plain}
\newtheorem{definition}[theoreme]{Definition}
\newtheorem{rem}[theoreme]{Remark}
\newcommand{\PPP}{PPP\textsuperscript{*}\xspace}
\newcommand{\PPPs}{PPPs\textsuperscript{*}\xspace}
\newcommand{\zn}{{z_{\bullet}}}
\newcommand{\zb}{{z_{\circ}}}
\newcommand{\Ab}{{\mathcal A_{\circ}}}
\newcommand{\An}{{\mathcal A_{\bullet}}}
\title{Periodic Parallelogram Polyominoes}
\author{Adrien Boussicault and Patxi Laborde-Zubieta}
\address{LaBRI - Universit\'e de Bordeaux, 351 cours de la Lib\'eration F-33405 Talence cedex}
\email{plaborde@labri.fr}
\email{boussica@labri.fr}
\begin{document}

\maketitle

\begin{abstract}
A periodic parallelogram polyomino is a parallelogram polyomino such that we
glue the first and the last column. In this work we extend a bijection between
ordered trees and parallelogram polyominoes in order to compute the generating
function of periodic parallelogram polyominoes with respect to the height, the
width and the intrinsic thickness, a new statistic unrelated to the existing
statistics on parallelogram polyominoes. Moreover we define a rotation over
periodic parallelogram polyominoes, which induces a partitioning in equivalent
classes called strips. We also compute the generating function of strips using
the theory of P\'olya.
\end{abstract}

\section{Introduction}
Convex polyominoes and parallelogram polyominoes are classical combinatorial
objects which have been studied by Delest and Viennot in \cite{vien}.
They are counted respectively by
$
f_{n+2} = (2n+11)4^n - 4(2n+1){2n\choose n}
$
and
$
c_n = \frac{1}{n+1} {2n\choose n}.
$

In this article, we are interested in a new family of parallelogram
polyominoes, the periodic parallelogram polyominoes (cf.
Definition~\ref{def:PPP}). They model infinite strips embedded in a cylinder.
In order to study these objects, we adapt a bijection, between parallelogram
polyominoes and forests of trees, introduced in \cite{BRS}. The statistics that
will be studied are the height, the width, the semi-perimeter and a new
statistic called the intrinsic thickness. We were able to get a generating
function with respect to these statistics. Our new statistic doesn't seem to be
related to the area, which is another important statistic of parallelogram
polyominoes (cf. article~\cite{bousquet}). However, if we fix the intrinsic
thickness, the number of parallelogram polyominoes of semi-perimeter equal to
$n$ is equal to the sum of the areas under all the dyck paths of semi-length
equal to $n$. In this paper, we also give the generating function of periodic
parallelogram polyominoes up to rotation of columns.

We start by giving (Section~\ref{sec:PPP}) the definition of a periodic
parallelogram polyomino, we will denote them PPPs. Then, in a second section
(Section~\ref{sec:foret}), we will study the internal structure of a PPP, by
constructing for each PPP a graph called \emph{ordered cyclic forest}. In the
section Section~\ref{sec:emondage}, we define a reversible operation over
ordered cyclic forests called \emph{pruning}, which will reduce the study of
PPPs to trunk PPPs equipped with a marked ordered cyclic forest. We introduce
in Section~\ref{sec:rotation}, the notion of strip by defining a rotation on
PPPs, then, using trunk PPPs and ordered cyclic forests we decompose a strip
as a cycle of 4-tuples. This bijective decomposition allow us to compute the
generating function of strips (Section~\ref{sec:gen_bande}) and the generating
function of PPPs (Section~\ref{sec:gen_ppp}).

\section{Periodic Parallelogram Polyominoes (PPPs)}
\label{sec:PPP}

We start by giving a definition of \emph{parallelogram polyominoes}.
\begin{definition}[PP]
    A \emph{parallelogram polyomino} is a maximal set of cells of $\mathbb
    Z\times\mathbb Z$, defined up translation, contained in between two paths
    with North and East steps that intersect only at their starting and ending
    points.
\end{definition}

The Figure~\ref{fig:PP} shows an example of a parallelogram polyomino.
\begin{figure}[ht]
    \begin{center}
        \vspace{30mm}
        \includegraphics[scale=.5]{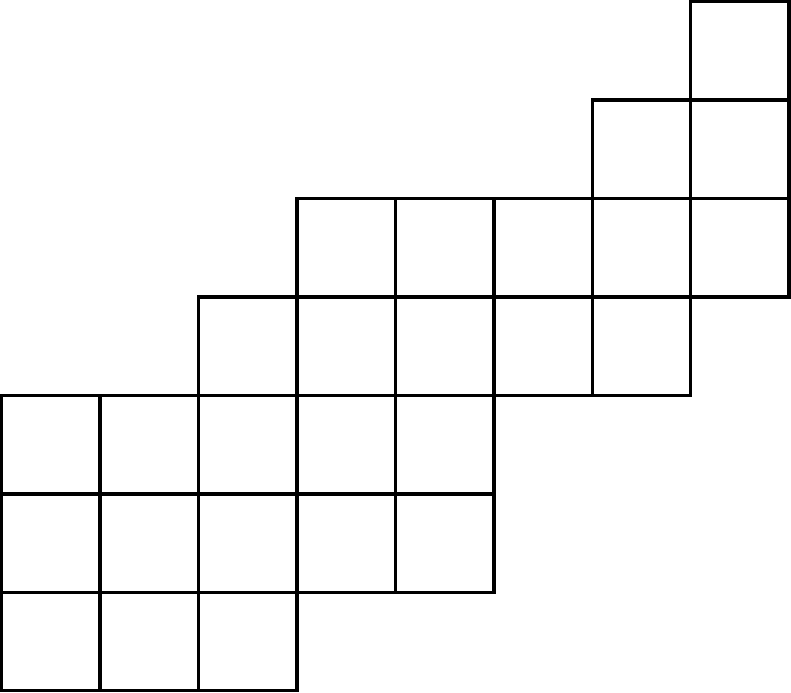}
    \end{center}
    \caption{ A parallelogram polyomino. } \label{fig:PP}
\end{figure}

The \emph{first} (resp. \emph{last}) \emph{column} corresponds to the leftmost (resp. rightmost) column.

\begin{definition}[PPP]
\label{def:PPP}

A \emph{periodic parallelogram polyomino} is a $PP$ with an admissible marked
cell in the first column. A cell is called admissible if its height is less or
equal to the size of the last column.
\end{definition}

This marking indicates the location where the first and the last column are
glued in order to obtain a \emph{periodic strip}. The Figure~\ref{fig:PPP}
gives an example of PPP and the beginning of the induced infinite strip. In the
rest of the article, multiple rows joint together will count as one. For
example, in the Figure~\ref{fig:PPP} the second row and the last row, starting
from the top, count as one.
Following this convention, we define the height of a PPP as its number of rows,
it is also the number of rows above the marked one. The number of column
defines the width of the PPP and the semi-perimeter is equal to the sum of the
width and the height. For example, the polyomino of Figure~\ref{fig:PPP} is of
height 5, width 8 and semi-perimeter 13.
\begin{figure}[ht]
    \begin{center}
        \includegraphics[scale=.4]{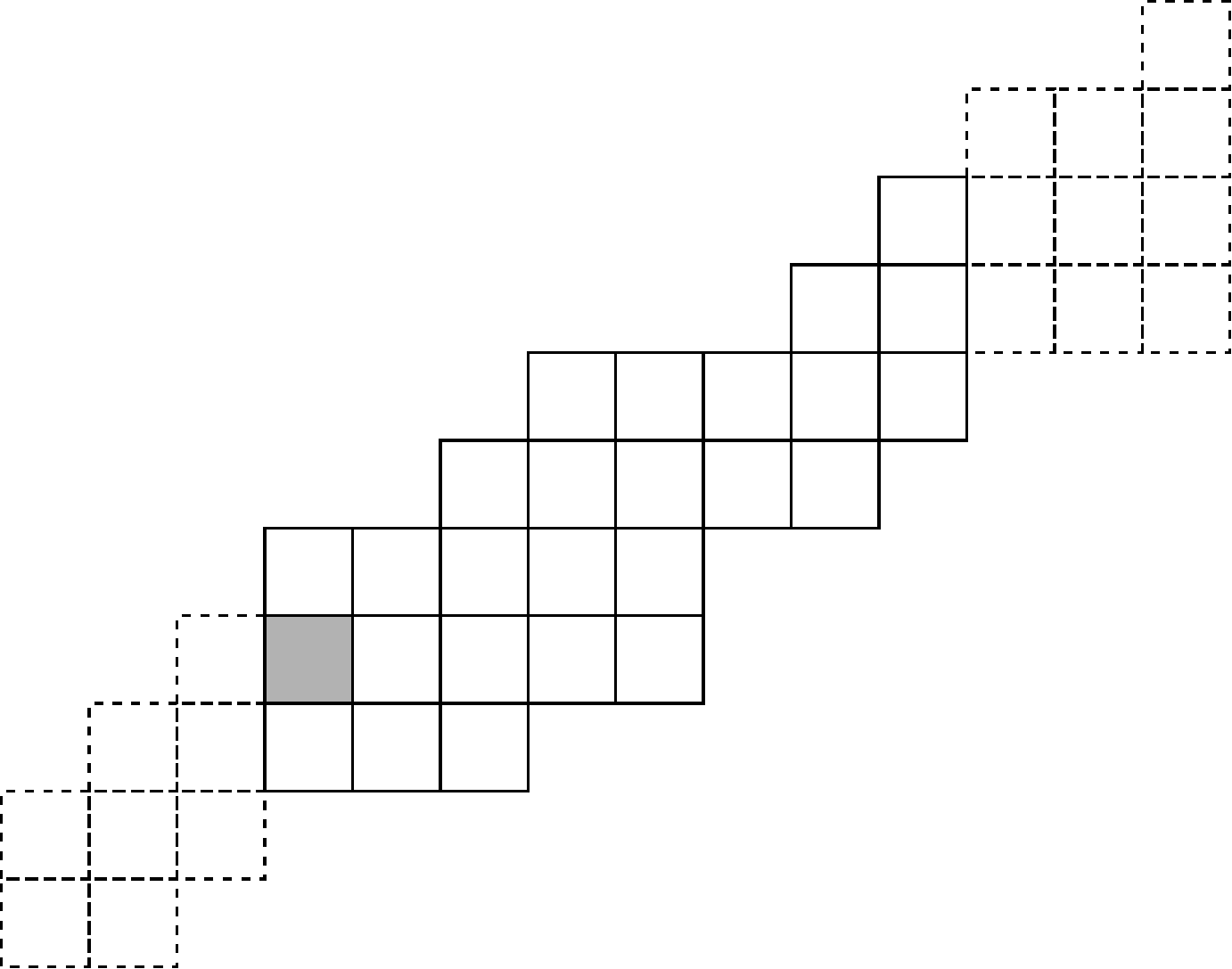}
    \end{center}
    \caption{ A periodic parallelogram polyomino. } \label{fig:PPP}
\end{figure}

From Section~\ref{sec:foret}, the study does not hold for \emph{degenerated} PPPs.

\begin{definition}
\label{def:PPPS}
A \emph{degenerated periodic parallelogram polyomino} is a PPP of rectangular
shape such that the marked cell is the topmost cell of the first column.
\end{definition}

For example, the left PPP of Figure~\ref{fig:PPP_degenere} is degenerated while
the right one is not.
\begin{figure}[ht]
    \begin{center}
        \begin{tabular}{c}
        \includegraphics[scale=.4]{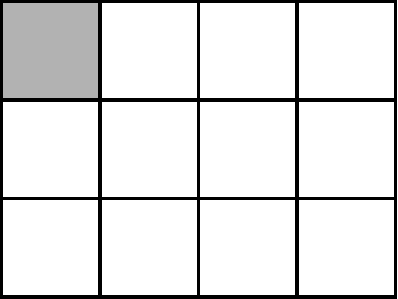} \\
        Degenerated.
        \end{tabular}
        \hspace{1cm}
        \begin{tabular}{c}
        \includegraphics[scale=.4]{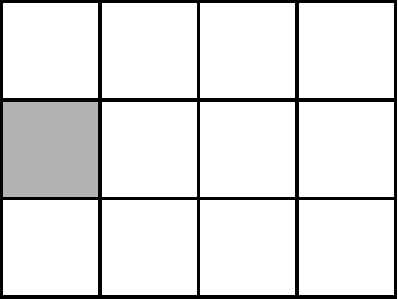} \\
        Non degenerated
        \end{tabular}
    \end{center}
    \caption{ A degenerated PPP and a non degerated one. } \label{fig:PPP_degenere}
\end{figure}

The generating function of degenerated PPPs, is equal to
$\frac{t}{1-t}\cdot\frac{y}{1-y}$, where $t$ counts the intrinsic thickness and
$y$ width, it therefore remains to study the non degenerated case. In the rest
of the article, a non degenerated PPP will be denoted by \PPP.

\section{Ordered cyclic forest of a \PPP}
\label{sec:foret}

In this section we define the notion of the ordered cyclic forest of a \PPP. It
is a graph which encodes the internal structure of a \PPP and gives a simpler
description of it.

Let $S$ be a finite set of elements and $father$ a map from $S$ to $S$.
Let $G_{father}$ be the graph whose vertices are the elements of $S$ and the
adjacency map $sons:=father^{-1}$. Each connected component of $G_{father}$
contains exactly one cycle. Indeed, by the Euler formula, the cyclomatic number
of each connected component is one since there are as much as edges as
vertices. Moreover, if $s$ is in a connected component $C$, the subgraph
$(father^k(s))_{k\ge 0}$ contains a cycle, and so do $C$.

For each vertex $s$ of $G_{father}$, we define a total order on $sons(s)$
denoted by $\overrightarrow{sons}(s)$. The graphs obtained by this construction
will be called \emph{ordered cyclic forests}. An example is given in the right
part of Figure~\ref{bij_phi}. In this figure, the sons are horizontally aligned
and ordered from left to right (from the biggest to the smallest). In order to
lighten the figure, the orientation of the edges is not explicitly written,
since it is not so important in our study.

We will now define a map $\Phi$ from $\PPPs$ to ordered cyclic forests. It
is based on a bijection between parallelogram polyominoes and forests of ordered
trees, introduced in \cite{BRS}. Let $P$ be a $\PPP$, the ordered cyclic forest
$\Phi(P)$ is obtained by defining $father$ and $\overrightarrow{sons}$ as
follows (the fact that multiple joint rows count as one still holds) :
\begin{itemize}
    \item the vertices are the topmost cells of columns and rightmost cells of rows;
    \item for all vertex $s$, if $s$ is the topmost cell of a column (resp.
        rightmost cell of a row) then $father(s)$ is the rightmost cell (resp.
        topmost cell) of its row (resp. column);
    \item let $s$ be a vertex and, $f_1$ and $f_2$ two of its sons. $f_1$ is
        bigger than $f_2$ if $f_1$ is at the right or above $f_2$ in $P$.
\end{itemize}
An example of $\Phi$ is given in Figure~\ref{bij_phi}, the vertices correspond
to the pointed cells.

\begin{figure}[ht]
    \begin{center}
    \includegraphics[scale=.35]{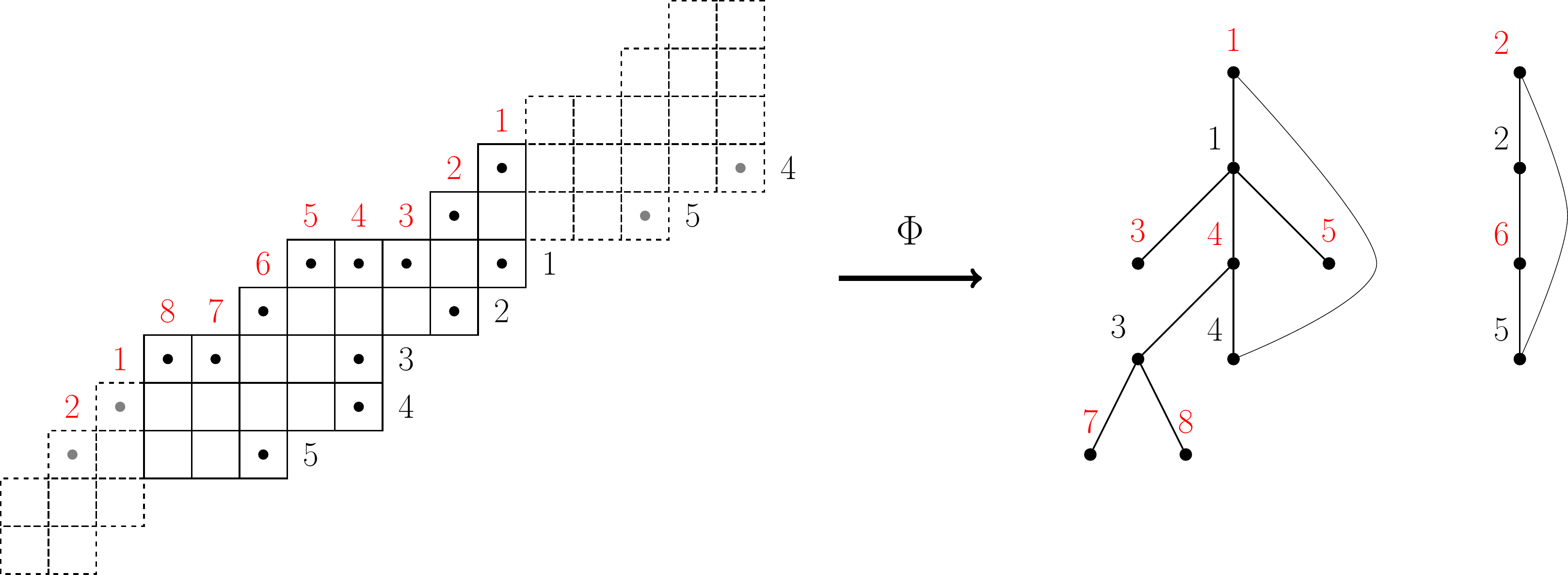}
    \end{center}
    \caption{The map $\Phi$.}
    \label{bij_phi}
\end{figure}

\section{Pruning and trunk \PPP}
\label{sec:emondage}

In the previous section, we introduced ordered cyclic forests. We will define
now an operation consisting in deleting recursively the rows and the columns in
$P$ corresponding to leaves in $\Phi(P)$. We call this operation, pruning. The
idea is to reduce the study of \PPPs to the ones whose ordered cyclic graph is
a disjoint union of cycles.

The leaves in $\Phi(P)$ correspond to rows and columns of $P$ which have only
one pointed cell. Hence, deleting a leaf in $\Phi(P)$ is equivalent to deleting
the corresponding row or column in $P$. If we know the position of the leaf
among its brotherhood we can construct back the corresponding row or column, up
to a rotation of column. For example, in Figure~\ref{figSuppresionColonne}, if
we want to reconstruct the leaf $\color{red}7$ we have no choice, but in the
case of the leaf $\color{red}8$, we don't know if it was in the first position
or in the last position. Therefore, pruning is reversible in the sens that we
can reconstruct a PPP up to rotation of columns. In order to, recover $P$, we
need to mark the vertex of $\Phi(P)$ corresponding to the first column of $P$.

\begin{figure}[ht]
    \begin{center}
    \includegraphics[scale=.35]{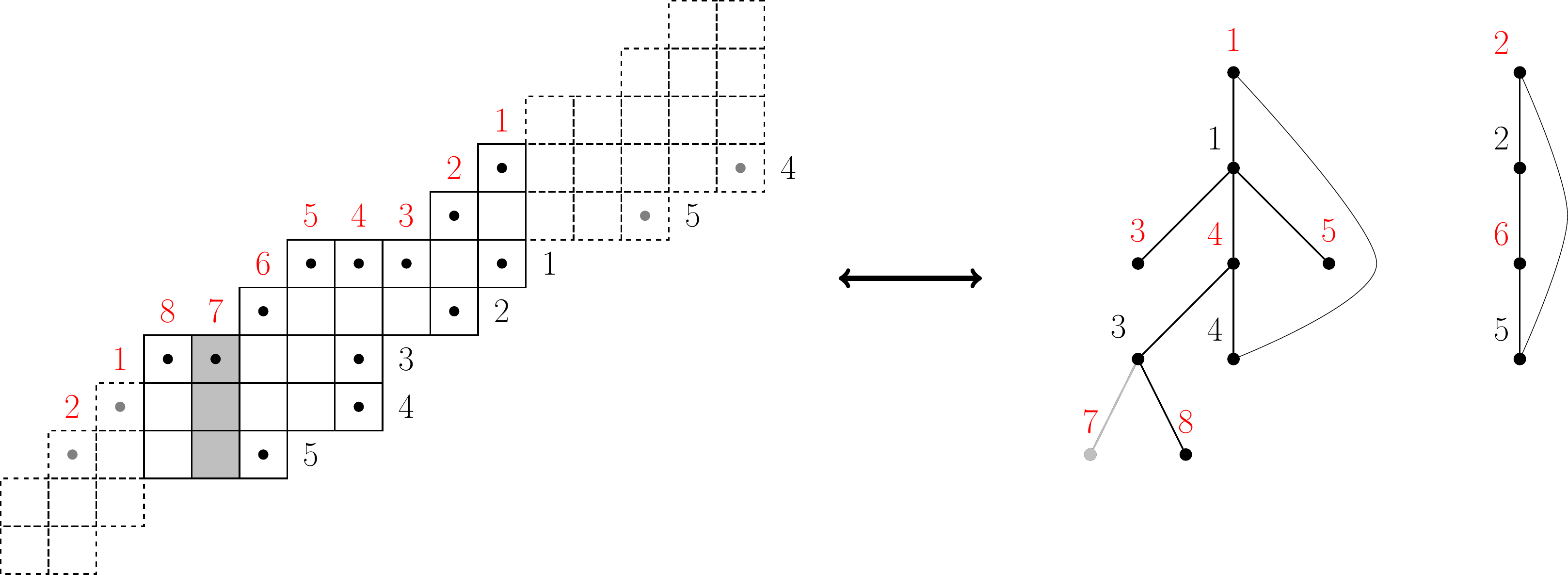}
    \end{center}
    \caption{
        Deletion of the leaf $\color{red}7$ of Figure~\ref{bij_phi}.
    }
    \label{figSuppresionColonne}
\end{figure}

After doing the complete pruning to $P$, the graph becomes a disjoint union of
cycles, we denote by $trunk(P)$ the \PPP we obtain. As the pruning is
invertible, if we know $\Phi(P)$, $trunk(P)$ and the marked vertex in $\Phi(P)$,
we can reconstruct $P$. Hence, we need to characterise the trunk \PPPs.

\begin{prop}\label{PropPrimitifs}
    A \PPPs is a trunk \PPP if:
\begin{itemize}
    \item the upper path is of the form $N^k(NE)^l$,
    \item the lower path is of the form $(EN)^lN^k$ and,
    \item the marked cell in the first column, is the topmost cell,
\end{itemize}
with $l$ and $k$ two positive integers.
\end{prop}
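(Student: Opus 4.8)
The plan is to characterize when $\Phi(P)$ is a disjoint union of cycles, i.e. when $\Phi(P)$ has no leaves, and to translate this condition into shape constraints on the upper and lower paths of $P$. Recall that the vertices of $\Phi(P)$ are the topmost cells of columns and the rightmost cells of rows, that $father$ sends the topmost cell of a column to the rightmost cell of its row (and vice versa), and that a vertex is a leaf precisely when it has no sons, i.e. no other pointed cell maps to it. So the first step is to note that $trunk(P)$ is exactly the condition that every column and every row of $P$ contains at least two pointed cells — equivalently, no column has its topmost cell equal to the rightmost cell of its row that is itself a ``terminal'' column, and symmetrically for rows. I would make this precise by observing: a column $c$ gives a leaf iff the only pointed cell attached to the rightmost cell of $\mathrm{row}(\text{top}(c))$ via $father$ is $\text{top}(c)$ itself, which happens iff that row has width $1$; dually a row of width $\geq 2$ but whose rightmost cell is the topmost cell of a column of height $1$ produces a leaf. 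Thus $P$ is a trunk \PPP iff $P$ has no column of height $1$ that is the last column reached along a row, and no row of width $1$, \emph{after} pruning is impossible — i.e. iff these pathological rows/columns are simply absent.

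Next I would carry out the geometric translation. Reading the upper path of $P$ from its starting corner, each maximal run $N^j$ of north steps followed by an east step corresponds to a column, and the pattern of the path near the top of each column records whether that column's topmost cell has sons. The ``no leaf'' condition forces that, as we move left to right, no column can be strictly shorter in a way that isolates it: concretely, all the ``short'' columns must be consecutive and sit at the beginning, giving the upper path the form $N^k (NE)^l$ where the initial $N^k$ builds the tall left block and each $NE$ adds one cell of height in a staircase. The dual analysis of the lower path, read so that rows play the role columns played above, forces the form $(EN)^l N^k$ with the \emph{same} $l$ and $k$ — the matching of the two exponents comes from the fact that the staircase part of the upper path and the staircase part of the lower path must be ``in phase'' for the polyomino to close up with no width-$1$ row and no height-$1$ column. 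Finally, the marked cell: a marked cell strictly below the topmost cell of the first column would, after the pruning reconstruction, not be recoverable as a trunk datum, or more directly, the first column's top cell is a vertex of $\Phi(P)$ and in a trunk the only consistent choice of the distinguished data forces the mark to be at the top; so the third bullet is forced.

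The main obstacle I expect is the bookkeeping in the second paragraph: cleanly proving that ``no leaves'' is \emph{equivalent} to (not merely implied by) the two stated path shapes, and in particular pinning down why the two staircase lengths must coincide rather than being merely both present. I would handle this by induction on the width (or on the number of pruning steps): strip off the staircase one $NE$/$EN$ pair at a time, check that each such pair corresponds to a column and a row that are \emph{not} leaves precisely when its partner on the other path is present, and verify the base case $l=0$, where $P$ is the $k\times(\text{width})$ rectangle with top-marked first column — which is exactly a disjoint union of cycles (one $2$-cycle per ``corner'' plus the degenerate structure), hence a trunk. Conversely, if either path deviates from the prescribed form, exhibit an explicit leaf (a height-$1$ column or width-$1$ row at the wrong place) and conclude that $P$ is not yet pruned, so is not a trunk \PPP. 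The equivalence, together with the forced position of the mark, then gives the proposition.
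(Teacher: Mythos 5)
The paper states this proposition without proof, so your attempt has to stand on its own, and it does not: the characterization of leaves on which everything else rests is wrong. A vertex of $\Phi(P)$ is a leaf when it has \emph{no sons}; for the vertex attached to a column $c$ this means that no row has its rightmost cell in $c$ --- equivalently, as stated in Section~\ref{sec:emondage}, that $c$ contains only one pointed cell. Your criterion instead tests whether $\mathrm{top}(c)$ is the \emph{only son of its father} (``the only pointed cell attached to the rightmost cell of $\mathrm{row}(\mathrm{top}(c))$ via $father$ is $\mathrm{top}(c)$ itself''), which is the condition for being an only child, not a leaf, and the further translation to ``that row has width $1$'' is false. A $3\times 3$ square already separates the two notions: it has no row of width $1$ and no column of height $1$, yet its first two columns each contain a single pointed cell and are therefore leaves. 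The same error sinks your base case: the $k\times w$ rectangle with top-marked first column is exactly the \emph{degenerated} PPP excluded from the whole study (Definition~\ref{def:PPPS}), and its ordered cyclic forest is not a disjoint union of cycles, since its interior columns and rows are leaves.

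The actual content of the proposition --- which your second paragraph asserts rather than proves --- is the translation ``no column is a leaf $\iff$ the lower path has no factor $EE$'' and ``no row is a leaf $\iff$ the upper path has no factor $NN$'', where both boundary words must be read \emph{cyclically through the gluing} determined by the marked cell. This cyclic reading is precisely what makes the initial block $N^k$ of the upper path harmless: the corresponding rows wrap around to the last columns and pick up topmost cells there. Because your argument reads the two paths non-periodically, it cannot explain why $N^k(NE)^l$ is leaf-free while other shapes are not, nor why the mark is forced to the topmost cell of the first column. (The point you single out as the main obstacle, the coincidence of the two exponents, is in fact automatic: the upper and lower paths of any parallelogram polyomino have the same number of $E$ steps, namely the width, and equality of the two $k$'s then follows from equating the numbers of $N$ steps.) A repaired proof needs (i) the correct leaf criterion, (ii) the cyclic reading of the two boundary words induced by the marked cell, and (iii) the equivalence of ``no cyclic $EE$ below and no cyclic $NN$ above'' with the stated staircase form and top-marked first column.
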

The Figure~\ref{fig:PPP_min} shows the trunk \PPP of the \PPP in Figure~\ref{figSuppresionColonne}. The corresponding integers $k$ and $l$ are respectively 2 and 4.

\begin{figure}[ht]
    \begin{center}
        \includegraphics[scale=.4]{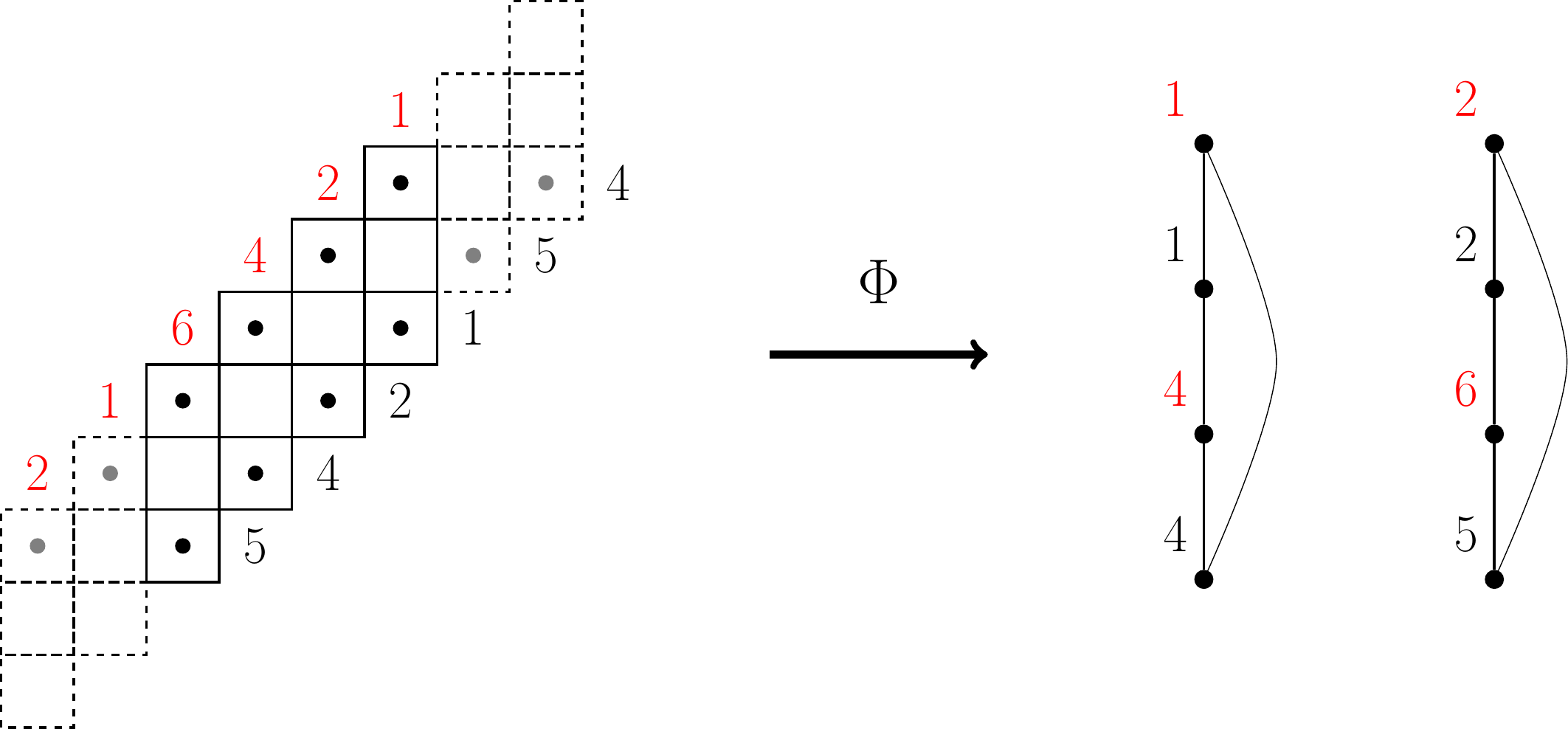}
    \end{center}
    \caption{The trunk \PPP of the \PPP of Figure~\ref{figSuppresionColonne} and its ordered cyclic forest .}\label{fig:PPP_min}
\end{figure}

\begin{prop}\label{PropCyclesNbSommets}
Let $P$ be a $\PPP$, then the cycles of the ordered cyclic forest $\Phi(P)$ are
of the same size, and this size is even.
\end{prop}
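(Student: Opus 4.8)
The plan is to analyze the structure of the cycles in $\Phi(P)$ directly from the definition of $father$. Recall that the vertices of $\Phi(P)$ are the topmost cells of columns (call these \emph{column-vertices}) and the rightmost cells of rows (call these \emph{row-vertices}), and that $father$ sends a column-vertex to the row-vertex of its row and vice versa. In particular $father$ \emph{alternates} between the two types: the image of a column-vertex is a row-vertex and the image of a row-vertex is a column-vertex. Consequently, iterating $father$ along any trajectory strictly alternates types, and when this trajectory closes into a cycle, the cycle must contain equally many column-vertices and row-vertices, hence have even length. This disposes of the parity claim. One subtlety to check is the degenerate situation where the topmost cell of a column coincides with the rightmost cell of a row (a single cell playing both roles); I would argue that with the convention that joint rows count as one, and since $P$ is non-degenerate, this does not create a fixed point of $father$, so the alternation argument goes through unchanged.

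For the statement that all cycles have the \emph{same} size, the idea is to relate the cycles to the trunk $trunk(P)$ obtained by pruning, using the fact (from Section~\ref{sec:emondage}) that pruning deletes only leaves and therefore does not alter the cycles of $\Phi(P)$; thus the cycles of $\Phi(P)$ are exactly the cycles of $\Phi(trunk(P))$, and it suffices to prove the claim for a trunk \PPP. By Proposition~\ref{PropPrimitifs}, a trunk \PPP has upper path $N^k(NE)^l$ and lower path $(EN)^l N^k$ for positive integers $k,l$, so its shape is completely explicit: there is a rectangular block of height $k$ sitting on a "staircase" of $l$ unit steps. I would then simply read off the column-vertices and row-vertices of this explicit shape and compute $father$ on each, checking that $\Phi(trunk(P))$ is a single cycle through all $2l$ of the staircase cells together with the $k-1$ extra joint rows — or, more carefully, identifying exactly which cells survive as vertices and tracing the orbit. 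The outcome should be that $\Phi(trunk(P))$ consists of one cycle whose length depends only on $k$ and $l$; since $\Phi(P)$ has the same cycles, they are all of this one common size.

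Here the main obstacle is bookkeeping rather than conceptual: I must be careful about the "joint rows count as one" convention when counting vertices of the trunk, and about whether the ordered cyclic forest of a trunk is literally one cycle or a disjoint union of several cycles of equal size (the phrase "disjoint union of cycles" in the text suggests the latter may occur in general, though for a single trunk coming from a connected $P$ I expect exactly one). To handle this cleanly I would first reduce to the trunk, then observe that each connected component of $\Phi(P)$ contains exactly one cycle (as established in Section~\ref{sec:foret}), and finally do the explicit computation on each trunk component of the form given by Proposition~\ref{PropPrimitifs} to see that its unique cycle has a size determined solely by the local data $(k,l)$, which forces all such sizes to coincide when they all equal, e.g., $2l$. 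The parity half of the proposition then also follows immediately, but the alternation argument above gives it with no computation at all, so I would present that first as a short standalone paragraph and use the explicit trunk analysis only for the equal-size half.
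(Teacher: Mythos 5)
Your overall strategy for the equal-size claim matches the paper's: the published proof is exactly ``pruning does not change the cycles, and the statement holds for trunk \PPPs,'' which is your reduction-to-the-trunk paragraph; your alternation argument (that $father$ swaps column-vertices and row-vertices, so every cycle has even length) is a clean self-contained addition that the paper does not spell out. The one point to correct is your expectation that $\Phi(trunk(P))$ is a single cycle of length $2l$: as the remark immediately following the proposition records, the trunk's ordered cyclic forest is in general a disjoint union of $l/\gcd(k,l)$ cycles, each of size $2\gcd(k,l)$ (connectedness of $P$ does not force $\Phi(P)$ to be connected, so you should not expect one cycle even for a connected trunk). Your hedged fallback --- that each cycle of the trunk has a size determined solely by the pair $(k,l)$, hence all cycle sizes coincide --- is the correct conclusion of the explicit computation, so the proof goes through once you carry out the trace of $father$ on the staircase with that outcome in mind rather than the single $2l$-cycle.
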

\begin{proof}
    The number of vertex of the cycles does not change when we prune the
    $\PPP$, and the Proposition~\ref{PropCyclesNbSommets} holds for trunk
    $\PPPs$.
\end{proof}

\begin{rem}
    It should be noted that multiple trunk \PPPs can give the same ordered
    cyclic forest. Indeed, $\Phi(P)$ contains $\frac{l}{\gcd(k,l)}$ disjoint
    cycles of size $2\gcd(k,l)$.
\end{rem}

Let $P$ be a \PPP and $k$ be the integer of $trunk(P)$ of
Proposition~\ref{PropPrimitifs}. The integer $k$ is called the \emph{intrinsic
thickness} of $P$.

\section{Action of the rotation over the \PPPs : the strips}
\label{sec:rotation}

In this section, we will define strips by defining how we rotate a \PPP.

If we join recursively a copy of a \PPP in the first and in the last column, we
obtain an infinite strip. Two different \PPPs can generate the same infinite
strip, in particular, they will have the same ordered cyclic forest. In order
to define more formally the notion of strip, we need to define the rotation of
a \PPP. Let $P$ be a \PPP of width at least 2. If we mark the second column at
the level of the topmost cell of the first column, and we move the first column
to the last position while respecting the initial marking. We obtain a \PPP
that we note $r(P)$. In particular, $\Phi(p)$ and $\Phi(r(P))$ give the same
ordered cyclic forest. This construction defines an automorphism $r$ of \PPPs
called \emph{rotation}. Let us denote $G$ the group generated by $r$.

\begin{definition}
    The rotation induces an action of the group $G$ on the set of \PPPs. The
    orbits of this action will be called \emph{strips}. Since the
    semi-perimeter and the intrinsic thickness are invariant by rotation, we
    extend the two statistics to strips.
\end{definition}
This notion of strip, model the embedding of an infinite strip in a cylinder
whose circumference is give by the semi-perimeter. That is why, if two \PPPs of
different semi-perimeter give graphically the same infinite strip by
translation, their strips will be viewed as different.

\section{Generating function of strips}
\label{sec:gen_bande}

In order to get the generating function of strips, we describe a bijection
between strips and pairs $(k,c)$ made of a positive integer $k$, that will
correspond to the intrinsic thickness, and a cycle $c$ of 4-tuples of ordered
trees. Then, we will use the theory of P\'olya to compute the generating
function.

\begin{prop}\label{Prop:BijBandes}
    The strips are in bijection with pairs made of:
    \begin{itemize}
        \item a positive integer $k$
        \item a cycle of size $l$ of 4-tuples of ordered trees.
    \end{itemize}
\end{prop}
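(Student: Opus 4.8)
The plan is to package the three invariants we have already constructed---$trunk(P)$, the ordered cyclic forest $\Phi(P)$, and the marked vertex---into the stated combinatorial data, and to pass to orbits under the rotation $r$ at the end. First I would recall that, by the pruning discussion of Section~\ref{sec:emondage}, a \PPP{} $P$ is recovered (up to rotation of columns) from $\Phi(P)$, $trunk(P)$, and the vertex of $\Phi(P)$ marking the first column; and by Proposition~\ref{PropPrimitifs} a trunk \PPP{} is encoded by the pair of integers $(k,l)$. The integer $k$ is the intrinsic thickness, which is rotation-invariant by the definition of strips, so it contributes the ``positive integer $k$'' factor directly. It remains to show that the rest of the data, \emph{taken modulo rotation}, is exactly a cycle of size $l$ of $4$-tuples of ordered trees.

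Next I would make the $4$-tuples explicit. Using the Remark after Proposition~\ref{PropCyclesNbSommets}, the cyclic forest $\Phi(P)$ (once the trunk part is stripped away) has each cycle of even size, and the combinatorial content hanging off the cycle vertices is a sequence of ordered trees: by the Bataille--Rostand--style bijection of \cite{BRS} that underlies $\Phi$, the subtrees attached at each cycle vertex, read in the order given by $\overrightarrow{sons}$, are ordinary ordered trees. Grouping the vertices of one period of the cycle in consecutive blocks---two consecutive ``column'' and ``row'' vertices together with the ordered-tree data they carry---yields $4$-tuples of ordered trees, one per step around the cylinder, and there are $l$ such steps (matching $l=\operatorname{lcm}$-type count coming from the $\tfrac{l}{\gcd(k,l)}$ cycles of size $2\gcd(k,l)$, which together contain $2l$ cycle vertices). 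I would check carefully that the bookkeeping gives a cyclic sequence of length exactly $l$ and that no information is lost or double counted; the marked vertex of $\Phi(P)$ corresponds precisely to a choice of where to cut the cycle, i.e.\ to a linear representative of the cyclic word.

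The final step is to quotient by the rotation $r$. Here the key point is that $r$ acts on the data $\big(k,(\text{marked linearized cycle of }4\text{-tuples})\big)$ by fixing $k$ and shifting the marked position by one along the cycle; hence the set of \PPPs sharing a given strip is exactly the set of cyclic rotations of one fixed linear word of $4$-tuples, and the strip---the $G$-orbit---is precisely the underlying cycle (necklace) of $4$-tuples together with $k$. Conversely, given $k$ and a cycle of size $l$ of $4$-tuples of ordered trees, one reconstructs a trunk \PPP{} from $(k,l)$, reattaches the ordered trees via the inverse of $\Phi$ and the pruning procedure, and obtains a well-defined strip, independent of the linearization chosen; this gives the inverse map. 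I would present these two maps and verify they are mutually inverse.

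The main obstacle I expect is the careful matching of the cyclic structure: showing that the attached ordered-tree data really organizes into $4$-tuples (rather than, say, $2$-tuples or $2\gcd(k,l)$-tuples) and that the number of them around the cylinder is $l$ and not $\gcd(k,l)$ or $2l$---in other words, reconciling the ``$\tfrac{l}{\gcd(k,l)}$ cycles of size $2\gcd(k,l)$'' picture of $\Phi(P)$ with a single cycle of length $l$ after the data is correctly repackaged. This is essentially an index-chasing argument about how the $l$ copies of the fundamental column pattern in $trunk(P)$ interleave the cycles of the forest, and getting the grouping into $4$-tuples exactly right is where the proof must be most careful; once that is settled, both the forward encoding and its inverse, as well as rotation-invariance, are routine.
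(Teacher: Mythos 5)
Your proposal follows essentially the same route as the paper's (itself only sketched) proof: encode the strip by the intrinsic thickness $k$ together with the data of $\Phi(P)$ organised along the $l$ columns of $trunk(P)$, and pass from a linear word to a cycle because pruning/reconstruction is only defined up to the rotation $r$. The one point you single out as the main obstacle --- why the pruned data groups into exactly $4$-tuples, one per trunk column --- is resolved in the paper by observing that each column of $trunk(P)$ carries two consecutive vertices of a cycle of $\Phi(P)$, and each such vertex contributes exactly two ordered trees (the ordered forests of pruned sons lying on either side of its unique son belonging to the cycle), so your outline is consistent with, and completed by, the paper's argument.
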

\begin{proof}[Proof (Sketched)]
    Let $P$ be a \PPP. $trunk(P)$ is uniquely
    define with the positive integers $k$ and $l$ of
    Proposition~\ref{PropCyclesNbSommets}. Each column of $trunk(P)$ contains
    two consecutive vertices $v_1$ and $v_2$ of the same cycle of $\Phi(P)$.
    Hence, we associate to each column, the 4-tuple of ordered trees (two for
    each vertex) rooted in $s_1$ and $s_2$. Following the order of the columns
    in $trunk(P)$, we construct a list of 4-tuples. Instead of a list we will
    consider a cycle since the pruning is reversible up to rotation. We will
    denote by $\psi(P)$ the pair $(k,c)$.
\end{proof}

The set of cycles composed of 4-tuples of ordered trees will be denoted by $C$.
The intrinsic thickness and the semi-perimeter of a strip $b$ are respectively
denoted $it(b)$ and $sp(b)$.

We want to compute the generating function $B(z,t)$ of strips, with $z$
counting the semi-perimeter and $t$ the intrinsic thickness:
\[
B(z,t)
:=
\sum_{b \mathrm{\ strip}}
    t^{it(b)}
    z^{sp(b)}
=
\sum_{k \ge 1 \atop c \in C}
    t^k
    z^{
        sp( \Psi^{-1} (k,c) )
    }
=
\frac{t}{1-t} \cdot
\sum_{c \in C}
    w(c)
.
\]
with
$
w(c) = 
z^{
    sp( \Psi^{-1} ((1,c)) )
}
$.

In order to find the generating function of strips, we just need to count the cycles whose elements are colored with 4-tuples of ordered trees.

The theory of P\`olya (cf. \cite{bergeron,polya}) gives a formula to compute
the generating function of cycles $c=(c_1, c_2, \dots, c_n)$ with the weight
$w(c) = \prod_i{w(\mathrm{\ color}(c_i))}$ and colored with colors taken in a set
$A$:
\[
Z(A)
:=
\sum_{c \mathrm{\ cycle}} w(c)
=
-\sum_{i\geqslant1}
\frac{\varphi(i)}{i}\log\left(1- \sum_{a \in A} w(a)^i\right),
\]
with $\varphi(i) := |\{ k<i \  | \   k \mathrm{\ and\ } i \mathrm{\  are\  relatively\ 
prime.}\}|$ is the Euler phi function.
Let $w(a) = z^{|a|}$ and $\mathcal{G}(z)$ be the generating function
$\sum_{a \in A} w(a)$
of $A$, then
$
\mathcal{G}(z^i)
$
is equal to
$
\sum_{a \in A} (w(a))^i.
$
Hence, the generating function of cycles becomes
\begin{equation}
-\sum_{i\geqslant1}\frac{\varphi(i)}{i}\log(1-\mathcal G(z^i)).
\end{equation}
In our case, we want to compute the generating function of strips with respect to the semi-perimeter. Since the semi-perimeter of a strip $b$ is equal to the number of vertices in its underlying graph $\Phi(P)$, hence we need to count the number of non-root vertices in $\Psi(P)$ plus 2 vertices for each 4-tuples in the cycle. That's why, the set $A$ corresponds to the set of 4-tuples of ordered trees with the weight:
\[w((T_1,T_2,T_3,T_4))=z^{|(T_1,T_2,T_3,T_4)|},\]
with $|(T_1,T_2,T_3,T_4)| = |T_1| + |T_3| + |T_3| + |T_4| + 2$ and $|T|$ is the number of non-root vertices of $T$. All that remains is to compute the generating function $\mathcal G(z)$ of the set $A$, which is equal to $z^2\mathcal A(z)^4$, where $\mathcal A(z)$ is the generating function of ordered trees counted with respect to the number of non-root vertices. It satisfies the equation:
\[\mathcal A(z) = \frac{1}{1-z\mathcal A(z)},\]
which can be solved:
\[\mathcal A(z) = \frac{1-\sqrt{1-4z}}{2z}.\]
Finally, the generating function of strips $\mathcal B(z,t)$ where $z$ counts the semi-perimeter and $t$ the intrinsic thickness, is equal to: 
\begin{equation}
    B(z,t)
    =-\frac{t}{1-t} \cdot \sum_{i\geqslant1}\frac{\varphi(i)}{i}\log(1-\frac{(1-\sqrt{1-4z})^4}{16z^{2i}}).
\end{equation}

This formula is quite surprising, because the intrinsic thickness is a statistic independent from other statistics. Hence, it is sufficient to study the family of strips with intrinsic thickness equal to 1 to characterise completely the combinatory of the entire family.

\section{Generating function of PPPs}
\label{sec:gen_ppp}

In this section we will give a non symmetric version of Proposition~\ref{Prop:BijBandes} in order to completely encode a \PPP with a pair $(k,s)$ with $k$ being the intrinsic thickness and $s$ a sequence of 4-tuples of ordered trees such that the first 4-tuple has a marked vertex.

In order to extend the Proposition~\ref{Prop:BijBandes} to \PPPs, it is
sufficient to mark the vertex corresponding the top cell of the first column.
Hence, we color in black the vertices corresponding to the top cells of columns
and in white the rightmost cells of rows. We obtain two types of bicolored
trees, the ones with black vertices at odd height and the ones with black
vertices at even height. In particular, each 4-tuple
$(T_{i,1},T_{i,2},T_{i,3},T_{i,4})$ is composed of two black rooted trees, and
two white rooted trees. Marking the top cell of a column correspond to marking
a non-root black vertex or the two black roots of a same 4-tuple. If we put in
first position the marked 4-tuple, we get a sequence instead of a cycle.

\begin{prop}\label{Prop:BijPPP}
    The \PPPs are in bijection with pairs composed of a positive integer $k$
    (the intrinsic thickness) and a non empty $l$-tuple of 4-tuples of
    bicolored ordered trees such that:
    \begin{itemize}
        \item each 4-tuple is composed of two black rooted trees and two white rooted trees,
        \item in the first 4-tuple, we mark a non-root black vertex or the two black roots.
    \end{itemize}
\end{prop}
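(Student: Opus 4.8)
The plan is to upgrade Proposition~\ref{Prop:BijBandes}, together with the map $\Psi$ of its proof, from strips to individual \PPPs; the two new ingredients are the bicoloring of the ordered trees and a single marked black vertex that rigidifies the cyclic symmetry present in the symmetric statement.

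First I would describe the forward map. Let $P$ be a \PPP and let $k,l$ be the integers of Proposition~\ref{PropPrimitifs} attached to $trunk(P)$; keep $k$, the intrinsic thickness. Color each vertex of $\Phi(P)$ black if it is the topmost cell of a column of $P$ and white if it is the rightmost cell of a row. By the very definition of $father$ in $\Phi$, the father of a black vertex is white and the father of a white vertex is black, so colors strictly alternate along every father-chain; hence each of the trees produced by the column-by-column decomposition of the proof of Proposition~\ref{Prop:BijBandes} is bicolored with color forced by depth parity, and each of the $l$ resulting 4-tuples $Q_1,\dots,Q_l$ is made of two black-rooted trees (carried by the black cycle vertex of the column, namely its topmost cell) and two white-rooted trees. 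Now the topmost cell of the \emph{first} column of $P$ is a vertex of $\Phi(P)$, and by our convention it is black; call it $m$. Either the first column of $P$ survives the pruning, in which case $m$ is a cycle vertex, that is, the common root of the two black-rooted trees of the 4-tuple $Q_j$ attached to that column of $trunk(P)$; or it is pruned away, in which case $m$ is a non-root black vertex inside a unique 4-tuple $Q_j$. In both cases, cyclically relabel so that $Q_j$ becomes the first component, and mark in it ``the two black roots'' in the first case and ``the non-root black vertex $m$'' in the second. This yields a pair of the desired form, and $l\geq 1$ makes the tuple non-empty.

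The inverse map reverses these steps. From $(k,l)$ rebuild $trunk(P)$ by Proposition~\ref{PropPrimitifs}; graft onto its $i$-th column, at the black and white cycle vertices of that column, the two black-rooted and two white-rooted trees of $Q_i$, inverting the grafting of Proposition~\ref{Prop:BijBandes}; forgetting colors this recovers an ordered cyclic forest $\Phi$ together with $trunk$, and the mark of $Q_1$ designates one black vertex $m$ of this graph (a grafted non-root vertex, or the black cycle vertex of the first column). Running the inverse pruning of Section~\ref{sec:emondage} on the triple $(\Phi,trunk,m)$ returns a \PPP, and one verifies that the two constructions are mutually inverse; the coloring is restored automatically because on $\Phi(P)$ the natural coloring is exactly ``black $=$ topmost cell of a column''.

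The main obstacle is not the bicoloring, which is forced, but justifying that ``a non-root black vertex, or the two black roots, of the first 4-tuple'' is an exact transversal of the extra data distinguishing $P$ from the other \PPPs of its strip. Concretely one must check that, over the fiber of the strip map with $trunk$ fixed, the topmost cell of the first column runs bijectively over all black vertices of $\Phi(P)$; that every such choice yields an admissible marked cell, so the inverse pruning indeed outputs a non degenerated \PPP; and that breaking the cycle of Proposition~\ref{Prop:BijBandes} at the marked 4-tuple is consistent with the rotation-of-columns indeterminacy already accounted for there. With these points settled, the forward and inverse maps above are inverse to each other by construction.
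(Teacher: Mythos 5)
Your construction is the same as the paper's: color topmost cells of columns black and rightmost cells of rows white, observe that colors alternate along father-chains so each 4-tuple carries two black-rooted and two white-rooted trees, and record the topmost cell of the first column either as a non-root black vertex or as the pair of black roots of its 4-tuple, which breaks the cycle of Proposition~\ref{Prop:BijBandes} into a sequence. Your write-up is in fact more careful than the paper's sketch (which does not explicitly discuss admissibility or the compatibility with the rotation-of-columns indeterminacy), but the approach is identical.
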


To compute the generating function of \PPPs, we need to find the generating functions of black rooted and white rooted ordered trees. We denote them respectively $\An(\zn,\zb)$ and $\Ab(\zn,\zb)$,
with $\zn$ counting the black vertices and $\zb$ the white ones. Those two generating functions satisfies the following equations.
\[
\An(\zn,\zb) = \frac{1}{1-\zb\Ab} \mathrm{\ and\ } \Ab(\zn,\zb) = \frac{1}{1-\zn\An}.
\]
After solving them, we obtain
\begin{equation}
\An(\zn,\zb) = \frac{\zn-\zb+1-\sqrt{(\zn-\zb+1)^2-4\zn}}{2\zn} = \Ab(\zb,\zn)
\end{equation}
Hence, the generating function of a 4-tuple of ordered trees is
\begin{equation}
\zn\zb\An(\zn,\zb)^2\Ab(\zn,\zb)^2.
\end{equation}
More over, if we mark one of the black vertices, we get
\begin{equation}
    \zn\partial_\zn\zn\zb\An(\zn,\zb)^2\Ab(\zn,\zb)^2
\end{equation}
Finally, the generating function of the \PPPs with $\zn$ counting the width,
$\zb$ the height and $t$ the intrinsic thickness is equal to
\begin{equation}
    P(\zn,\zb,t) =
    \frac{t}{1-t}
    \cdot
    \frac{
        \zn\partial_\zn\zn\zb\An(\zn,\zb)^2\Ab(\zn,\zb)^2
    }{
        1-\zn\zb\An(\zn,\zb)^2\Ab(\zn,\zb)^2
    }.
\end{equation}

As in the previous section, we notice that the intrinsic thickness is a
statistic independent from the other statistics. Hence, in order to
characterize combinatorially the \PPPs, it is sufficient to study the \PPPs of
intrinsic thickness 1. The generating function of \PPPs of intrinsic height 1
is equal to:
\[
P(z,z,t)[t]
=
1\cdot z + 6 \cdot z^2 + 29\cdot z^3 + 130\cdot z^4 + 562 \cdot z^5 + 2380\cdot z^6 + \ldots
\]
The sequence of the previous coefficients appears in OEIS (cf.~\cite{oeis}), it
corresponds to the sequence A008549. The $n$-th term is the sum of the areas
under all the dyck paths of semi-length $n$.

\vspace{1cm}
\section*{Acknowledgement}
\label{sec:ack}

The authors are grateful to Sre\v{c}ko Brlek for his attention, his advices and
for the numerous discussions we had during our stays in the LaCIM (Laboratoire
de Combinatoire et d'Informatique Math\'ematique) at the University of Quebec at
Montreal.

We thank the LIRCO (Laboratoire International Franco-Qu\'eb\'ecois de Recherche en
Combinatoire) for financing our respective stays at LaCIM.

This research was driven by computer exploration using the open-source software
\texttt{Sage}~\cite{sage} and its algebraic combinatorics features developed by
the \texttt{Sage-Combinat} community \cite{Sage-Combinat}.

\bibliographystyle{plain}
\bibliography{bibliographie}
\end{document}